\documentclass[12pt,oneside]{article}

\setlength{\textwidth}{160mm} \setlength{\textheight}{210mm}
\setlength{\parindent}{8mm} \frenchspacing
\setlength{\oddsidemargin}{0pt} \setlength{\evensidemargin}{0pt}
\thispagestyle{empty}
\usepackage{mathrsfs,amsfonts,amsmath,amssymb}
\usepackage{latexsym}
\usepackage{comment} 
\usepackage{graphicx}
\usepackage[T1]{fontenc}
\usepackage[utf8]{inputenc}
\usepackage{authblk}
\usepackage{caption}

\newcommand{\qed}{{} \hfill \mbox{$\Box$}}

\newtheorem{satz}{Theorem}

\newtheorem{theorem}[satz]{Theorem}
\newtheorem{lemma}[satz]{Lemma}
\newtheorem{definition}[satz]{Definition}

\newtheorem{remark}[satz]{Remark}

\newtheorem{conjecture}[satz]{Conjecture}
\newtheorem{claim}[satz]{Claim}

\def\F{\mathbb {F}}

\def\d{\delta}

\def\({\big (}
\def\){\big )}

\def\ge{\geqslant}
\def\_phi{\varphi}

 \makeatletter
      \def\@setcopyright{}
      \def\serieslogo@{}
      \makeatother

\begin{document}

\title{Tilted Corners in Integer Grids}

\author[1]{I.~D.~Shkredov}

\author[2]{J.~ Solymosi}

\affil[1]{Steklov Mathematical Institute, Moscow \& IITP, Moscow \& MIPT, Dolgoprudnii}
\affil[2]{University of British Columbia, Vancouver}


   \date{}

\maketitle

{\centering Dedicated to the memory of Ron Graham\par}

\begin{abstract}
It was proved by Ron Graham and the second author that for any coloring of the $N \times N$ grid using fewer than $\log \log N$ colours,
one can always find a monochromatic isosceles  right triangle, a triangle with
vertex coordinates $(x, y),(x + d, y),$ and $(x, y + d).$ In this paper we are asking questions where not only axis-parallel, but tilted  isosceles right triangles are considered as well. Both colouring and density variants of the problem will be discussed.
\end{abstract}

\section{Introduction}

In this paper we are going to consider several problems inspired by questions raised by Ron Graham. After learning Szemer\'edi's proof of the Erd\H{o}s-Tur\'an conjecture on $4$-term arithmetic progressions in dense subsets of integers \cite{Sze}, Graham asked the following question: Is it true that for any real number $\delta > 0$ there is a natural number $N_0 = N_0(\delta)$ such that for $N>N_0$ every subset of $[N]\times [N]$ of size at least $\delta N^2$ contains a square, i.e., a quadruple of the form $\{(a, b),(a + d, b),(a, b + d),(a + d, b + d)\}$  for some integer $d \neq 0$ ? $([N] = \{0, 1, 2,\ldots,N - 1\}).$
Using the full power of Szemer\'edi's theorem on $k$-term arithmetic progressions, Ajtai and
Szemer\'edi in \cite{ASz} proved a simpler statement: for sufficiently large $N,$ every subset of $[N]\times[N]$
of size at least $\delta N^2$ contains corners, three points with coordinates $\{(a, b),(a + d, b),(a, b + d)\}
\footnote{Through the paper we are assuming that the corners and squares are not degenerate, $d \neq 0.$}$ (see also in \cite{VV})
 Later F\"urstenberg and Katznelson proved a much stronger,
general theorem in \cite{FK}, but their proof didn't give an explicit bound
as it uses ergodic theory. After Tim Gowers gave an analytical proof for Szemer\'edi's theorem (receiving a \$1,000 check from Ron Graham who paid rewards offered by Paul Erd\H{o}s) he again raised the question of finding a quantitative proof for Graham's question. Such proof was given by the second author in \cite{Soly} using a hypergraph regularity lemma of Frankl and R\"odl \cite{FR}. Although it is quantitative, it is still very far from a conjecture of Graham:

\begin{conjecture}[Ron Graham \cite{Gr}]
Given a set of lattice points in the plane
$$ S = \{p_1, p_2,\ldots,p_i, p_{i+1},\ldots\},$$
let us denote the distance of $p_i$ from the origin by $d_i$. If

\[
\sum_{i=1}^\infty\frac{1}{d_i^2}=\infty \,,
\]
then $S$ contains the four vertices of an axes-parallel square.
\end{conjecture}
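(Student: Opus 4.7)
The statement is Graham's long-standing open conjecture, and the natural route is to reduce the infinitary divergence hypothesis to a quantitative density statement on finite boxes. Write $\delta(N)$ for the smallest density $\delta>0$ such that every subset of $[N]\times[N]$ of density at least $\delta$ contains an axis-parallel square; by the result of the second author cited above, $\delta(N)\to 0$. Dyadically decompose $S$ by the annuli $A_k:=\{p:2^k\le |p|<2^{k+1}\}$, each containing $\asymp 4^k$ lattice points, and set $\delta_k:=|S\cap A_k|/|A_k|$. Then
\[
\sum_{i\ge 1}\frac{1}{d_i^2}\;\asymp\;\sum_{k\ge 0}\delta_k,
\]
so the hypothesis $\sum 1/d_i^2=\infty$ translates to $\sum_k \delta_k=\infty$.

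Suppose now that one could prove a quantitative square-finding theorem strong enough that $\sum_k \delta(2^k)<\infty$. Then the two series forced some $k$ to satisfy $\delta_k>\delta(2^k)$. Covering $A_k$ by $O(1)$ axis-aligned boxes of side $\asymp 2^k$, a pigeonhole step would produce a box $B$ of side $N\asymp 2^k$ in which the relative density of $S$ exceeds $\delta(N)$, and the definition of $\delta(N)$ would then supply an axis-parallel square inside $S\cap B$. In particular, the conjecture is reduced to any polylogarithmic bound of the form $\delta(N)\ll (\log N)^{-1-\eps}$.

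\textbf{Main obstacle.} The entire difficulty is concentrated in this quantitative bound. The proof of the square theorem cited above passes through the Frankl--R\"odl hypergraph removal lemma and therefore only yields a tower-type dependence of $N_0(\delta)$ on $\delta^{-1}$; equivalently, $\delta(N)$ decays no faster than an inverse tower in $N$ --- hopelessly far from being summable over dyadic scales. Even for the strictly easier corners problem, the best available bound (due to the first author, using higher-order Fourier techniques in the $U^2/U^3$ range) is still only $\delta(N)\ll (\log\log N)^{-c}$. A realistic attack would presumably try to combine a density-increment strategy tailored to the four-point square configuration with an almost-periodicity step of Croot--Sisask / Bloom--Sisask type; the genuinely higher-order ($U^3$-like) nature of the obstruction to finding squares is what makes this substantially harder than the analogous three-term-progression argument, and no current method appears to reach the polylogarithmic density range that the conjecture requires.
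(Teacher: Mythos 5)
The statement you were asked about is not a theorem of the paper but Graham's open conjecture --- the one he offered \$1{,}000 for and expected never to have to pay --- so there is no proof in the paper to compare against, and your proposal, as you yourself concede, does not close the problem. What you have written is a (correct, and standard) reduction: the dyadic annulus decomposition, the observation that $\sum_i d_i^{-2}\asymp\sum_k\delta_k$, and the pigeonhole into $O(1)$ boxes of side $\asymp 2^k$ are all sound modulo constants, and they show that the conjecture would follow from any bound of the form $\delta(N)\ll(\log N)^{-1-\varepsilon}$ for axis-parallel squares. This is in fact exactly the mechanism the authors have in mind when they remark that their \emph{weaker} Conjecture 2 (isosceles right triangles in place of squares) ``might be accessible'' in light of Theorem \ref{Corner}: for tilted corners the known density bound is $O(n^2/\log^{1-o(1)}n)$, and the Remark after Theorem \ref{Corner} asks precisely for the improvement to $\log^{1+c}n$ that would make the dyadic sum converge. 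So your reduction is the right frame, but it is aimed at the wrong target to be called a proof.

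The genuine gap is the one you name yourself: for axis-parallel squares the only known density theorem (Solymosi via the Frankl--R\"odl hypergraph regularity/removal machinery) gives $\delta(N)$ decaying like an inverse tower, and even the corners problem only reaches $(\log\log N)^{-C}$. Nothing in the literature, and nothing in your argument, supplies the summable bound your reduction requires; the conditional statement ``if $\sum_k\delta(2^k)<\infty$ then the conjecture holds'' is the entire content of your write-up, and its hypothesis is wide open. One further small caution: your pigeonhole loses a multiplicative constant (a box of side $2^k$ covering part of $A_k$ receives only $\delta_k/C$ of the density), so the clean inequality $\delta_k>\delta(2^k)$ should read $\delta_k>C\,\delta(2^k)$; this is harmless for the reduction but worth stating, since the whole argument is about summability up to constants.
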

The second author of this paper heard the conjecture from Ron Graham multiple times, with increasing reward offer. Once Ron said {\em ``I think it is a safe bet to offer \$1,000 for the solution. I don't think I ever have to pay that.''}

Even after the recent breakthrough of Bloom and Sisask, breaking the logarithmic barrier in Roth's theorem on three term arithmetic progressions \cite{BS_AP3}, we are very far from such bounds. We offer a weaker conjecture, changing squares to corners even allowing rotated (tilted) corners. In light of Theorem \ref{Corner} below, it might be accessible using techniques available now.

\begin{conjecture}
Given a set of lattice points in the plane
$$ S = \{p_1, p_2,\ldots,p_i, p_{i+1},\ldots\},$$
let us denote the distance of $p_i$ from the origin by $d_i$. If

\[
\sum_{i=1}^\infty\frac{1}{d_i^2}=\infty \,,
\]
then $S$ contains the three vertices of an isosceles right triangle.
\end{conjecture}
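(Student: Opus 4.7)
\medskip
\noindent\textbf{Proof plan.} The strategy is to reduce the conjecture to a suitably quantitative version of Theorem \ref{Corner} by a dyadic pigeonhole on the radial distribution of $S$, in direct analogy with the way density $3$-AP theorems are converted into divergence corollaries.

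For $k \ge 0$ set $S_k := \{p_i \in S : 2^k \le d_i < 2^{k+1}\}$, so that $S_k$ is contained in the box $B_k := [-2^{k+1}, 2^{k+1}]^2$. Each $p_i \in S_k$ contributes $\Theta(4^{-k})$ to $\sum_i 1/d_i^2$, so the divergence hypothesis is equivalent to
\[
\sum_{k \ge 0} a_k = \infty, \qquad a_k := |S_k|/4^k,
\]
where $a_k$ is, up to an absolute constant, the density of $S_k$ inside $B_k$. Because any triangle found in some $S_k$ is automatically a triangle of $S$, it suffices to find a single $k$ for which $S_k$ contains a tilted isosceles right triangle.

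Now suppose Theorem \ref{Corner} (or a refinement thereof) supplies a threshold $\delta(N)$ with the property that every subset of $[N]^2$ of density at least $\delta(N)$ contains a tilted isosceles right triangle. If $\delta$ satisfies the summability condition $\sum_k \delta(2^k) < \infty$, then the divergence $\sum_k a_k = \infty$ forces $a_k > C\delta(2^k)$ for infinitely many $k$; fixing any such $k$, the set $S_k$, viewed inside an $N_k \times N_k$ grid with $N_k = \Theta(2^k)$, exceeds the threshold and therefore contains a tilted isosceles right triangle, which is automatically a triangle of $S$.

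The entire argument thus collapses to the quantitative strength of Theorem \ref{Corner}, and this is where the main obstacle lies. One requires $\delta(N)$ to decay strictly faster than $1/\log N$, for instance $\delta(N) = O(1/(\log N)^{1+\eta})$ with some $\eta > 0$, since the borderline rate $\delta(N) = 1/\log N$ produces $\sum_k \delta(2^k) = \sum_k 1/(k\log 2) = \infty$ and the argument fails. Neither the classical Ajtai--Szemer\'edi density bound for axis-parallel corners nor the best current versions of it come close to this summability threshold. The reason the tilted variant is expected to be more tractable is the additional two-parameter family of rotations $(a,b) \in \Z^2$ available for the right angle at every scale: averaging a Bloom--Sisask \cite{BS_AP3} type estimate over all admissible rotations should, optimistically, push the exponent of $\log N$ strictly past one and thereby close the argument via the dyadic scheme above.
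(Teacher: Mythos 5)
This statement is stated in the paper as a \emph{conjecture}; the paper offers no proof of it, only the remark that it ``might be accessible'' in light of Theorem \ref{Corner}. Your proposal does not prove it either. The dyadic decomposition into annuli $S_k$, the identification of the divergence hypothesis with $\sum_k a_k = \infty$, and the observation that $\sum_k \delta(2^k) < \infty$ would force some $S_k$ to exceed the density threshold are all correct and standard (this is exactly the classical reduction of the Erd\H{o}s divergence conjecture for arithmetic progressions to a density bound of the form $(\log N)^{-1-\eta}$). But the entire argument then rests on a quantitative input that is not available: you need every tilted-corner-free subset of $[N]^2$ to have density $O\left((\log N)^{-1-\eta}\right)$ for some $\eta>0$. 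Theorem \ref{Corner} only gives $O(n^2/\log^{c_1} n)$ for an unspecified constant $c_1>0$, and the best bound recorded in the paper's own Remark is Bloom's $|A| = O(n^2/\log^{1-o(1)} n)$, which sits strictly on the non-summable side of the threshold. Indeed, the Remark explicitly poses the improvement to $O(n^2/\log^{1+c} n)$ as an open problem. So your argument is a (correct) reduction of one open problem to another, not a proof.

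Your closing suggestion --- that averaging a Bloom--Sisask type estimate over the two-parameter family of rotations should push the exponent past one --- is the only place where new content would have to enter, and it is left entirely unsubstantiated. As it stands there is no mechanism in the proposal by which the extra rotational freedom improves the logarithmic exponent; the Fourier/Gowers-norm argument of Lemma \ref{lemma} already exploits the invertibility of the matrices $M_1, M_2$ and does not obviously gain from averaging over the choice of $\vec{y}^{\perp}$. Until that step is made precise, the gap is genuine and the conjecture remains open.
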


\noindent
If we restrict our attention to axis parallel corners, then the best known density bound for the Ajtai-Szemer\'edi theorem belongs to the first author: 
\begin{theorem}[Shkredov \cite{s_IAN}] 
For sufficiently large $N,$ every subset of $[N]\times[N]$
of size at least $N^2/(\log\log{N})^C$ contains corners, three points with coordinates $$\{(a, b),(a + d, b),(a, b + d)\}.$$
\end{theorem}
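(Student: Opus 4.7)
The plan is to adapt the Fourier-analytic density-increment machinery that Bourgain and later Sanders developed for Roth's theorem on three-term progressions to the two-dimensional corners setting. The passage from one dimension to two costs one logarithm in the overall balance between density and structure, which is precisely what produces a $(\log\log N)^{-C}$ bound here where the analogous Roth-type argument produces $(\log N)^{-c}$.

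Fix $A\subseteq [N]\times[N]$ with $|A|=\d N^2$, and for each row $y$ set $A_y=\{x:(x,y)\in A\}$. The corner count
\[
T(A)\;=\;\sum_{x,y,d}1_{A_y}(x)\,1_{A_y}(x+d)\,1_{A_{y+d}}(x)
\]
has expected value of order $\d^3 N^3$ on random sets of density $\d$. If $T(A)$ is substantially smaller than this, then expanding in Fourier in the $x$-variable and applying Cauchy-Schwarz across $y$ forces, for a positive density of rows, the transform $\h{1_{A_y}}$ to concentrate on some common frequency set. The first step is to upgrade this to a single small spectrum $\Lambda\subseteq\h{\Z}_N$ that captures a constant fraction of the $L^2$-mass of $\h{1_{A_y}}$ for many $y$ simultaneously; I would carry this out by grouping rows whose large-coefficient frequencies are $O(1/N)$-close and pigeonholing on the resulting clusters.

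Next, invoke Chang's spectral lemma to bound $|\Lambda|\ll \d^{-1}\log(1/\d)$ and produce a regular Bohr set $\Bohr(\Lambda,\rho)$ of rank $|\Lambda|$ on which an affine translate of $A$ has density at least $\d(1+c)$ for some absolute $c>0$. Combining this increment with a pigeonhole in $y$ over long arithmetic progressions yields a genuine density increment for the corner problem on a product of a Bohr set and a progression. Iterating $O(\d^{-1})$ times doubles the density, and $O(\log(1/\d_0))$ such doublings exhaust the ambient space.

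The main obstacle, and the reason the bound is double-logarithmic rather than merely $1/\log\log N$ as in the original triangle-removal argument, is controlling the accumulated rank of the Bohr set along the iteration: the naive spectrum bound would add $\d^{-2}$ dimensions per step and force the scheme to collapse after a bounded number of rounds, whereas Chang's bound of $\d^{-1}\log(1/\d)$ dimensions per step sums geometrically and keeps the final rank polylogarithmic in $1/\d$. The technically delicate point is to engineer the $x$-Fourier analysis and the $y$-partition to be compatible enough that Chang's theorem is applicable at each round, and that the regularity of the Bohr set is preserved when one passes to the induced corners problem. Once this is done, the standard regular-Bohr-set bookkeeping of Bourgain closes the iteration and yields the stated bound $|A|\ll N^2/(\log\log N)^C$.
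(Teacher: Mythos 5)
First, a point of reference: the paper does not prove this theorem itself; it is quoted from \cite{s_IAN}, so your proposal has to be measured against Shkredov's argument there.

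The central step of your sketch --- that if the corner count is too small then ``expanding in Fourier in the $x$-variable and applying Cauchy--Schwarz across $y$'' forces the transforms $\widehat{1_{A_y}}$ to concentrate on a common small spectrum, after which Chang's lemma gives a Bohr-set density increment --- is where the argument breaks. The corners operator is not controlled by linear Fourier coefficients of the rows. Applying Cauchy--Schwarz to $\sum_{x,y,d} f(x,y)\,1_A(x+d,y)\,1_A(x,y+d)$ (with $f$ a balanced function) bounds the deviation by the rectangle (box) norm $\big(\sum_{x,x',y,y'} f(x,y)f(x',y)f(x,y')f(x',y')\big)^{1/4}$, and the inverse theorem for this norm produces correlation with a product set $E_1\times E_2$ of two \emph{arbitrary} dense sets, not with a character or a common frequency set. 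Sets of the shape $A=\{(x,y): x\in E_y\}$, with the $E_y$ translates of a fixed unstructured set, show that no common Fourier concentration across rows need occur even when the corner count is badly wrong. Consequently Chang's lemma has nothing to bite on, and the Bohr-set increment you describe cannot be run.

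This is precisely why the known bound is $(\log\log N)^{-C}$ rather than $(\log N)^{-c}$: in \cite{s_IAN} the density increment lands on a Cartesian product $E_1'\times E_2'$ of combinatorially unstructured sets, on which one cannot count corners directly; each outer step therefore requires an inner iteration that uniformizes $E_1'$ and $E_2'$ before the count can be restarted, and it is this nested two-level iteration --- not accumulation of Bohr-set rank controlled by Chang's bound --- that produces the double logarithm. Your bookkeeping addresses a difficulty that does not arise in this problem, while the difficulty that does arise (product-set obstructions for the box norm) is not addressed. As written, the proposal is a Roth-type scheme and does not yield the theorem.
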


This problem is one of the few examples where the colouring variant has a better (known) bound than its density version.

\begin{theorem}[Graham-Solymosi \cite{GS}]\label{GrSo}
For $N$ large enough, any colouring of the $N \times N$ grid using fewer than $\log \log N$ colours,
one can always find a monochromatic isosceles right triangle, a triangle with
vertex coordinates $(x, y),(x + d, y),$ and $(x, y + d).$    
\end{theorem}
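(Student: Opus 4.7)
The plan is to proceed by induction on the number of colours $k$, establishing a double-exponential bound on the required grid size, so that $k$ colours suffice provided $N \gtrsim 2^{2^k}$, equivalently $k \lesssim \log\log N$. The base case $k=1$ is trivial (a $2\times 2$ grid already works), so everything hinges on the inductive step $k-1 \Rightarrow k$.

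The setup for the inductive step is a single application of pigeonhole to one row. Given a $k$-colouring of $[N]^2$, in the bottom row some colour — call it red — occupies a set $A \subset [N]$ with $|A| \ge N/k$. For each pair $a < a'$ in $A$, the cell $(a, a'-a)$ completes an axis-parallel isosceles right triangle $\{(a,0),\,(a',0),\,(a,a'-a)\}$; so either we already have a red monochromatic corner, or every such cell is coloured with one of the remaining $k-1$ colours. Call this collection of cells the \emph{witness set} $W$: it has size $\binom{|A|}{2}$ and is $(k-1)$-coloured.

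The plan is then to extract from $W$ a sub-configuration on which the inductive hypothesis applies. The geometry of $W$ is that of a triangle whose columns are indexed by $A$ and whose heights are positive differences of elements of $A$. A further pigeonhole in the column of $W$ above the smallest element of $A$ picks out a colour (say blue) that dominates at least $(|A|-1)/(k-1)$ of the witness heights. Iterating this across successive columns indexed by $A$, and using the additive structure of $A$ to align heights that appear in many columns, one isolates a grid-like subsystem using at most $k-1$ colours, to which the inductive hypothesis can be applied.

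The main obstacle will be controlling the loss in each inductive step: one needs an inequality of the shape $N(k) \le N(k-1)^{O(1)}$ so that the final bound is doubly exponential in $k$, rather than a tower. A single column-pigeonhole only costs a factor of $k$, but matching heights \emph{across} many columns — the step that yields a genuine two-dimensional sub-grid inside $W$ — is delicate and amounts to an additive combinatorics statement about $A$: roughly, that $A$ has enough common differences to carve out a $\sqrt{N/k}\times \sqrt{N/k}$ grid of cells at which $W$ is defined. If this counting goes through, the inductive step closes and produces the claimed $\log\log N$ threshold on the number of colours.
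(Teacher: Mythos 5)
The paper itself does not prove Theorem \ref{GrSo} --- it is quoted from \cite{GS} --- so I am judging your proposal against the known argument. Your top-level shape (induction on the number of colours with a loss per step of the form $N(k)\le N(k-1)^{O(1)}$, hence a doubly exponential threshold and the $\log\log N$ bound) is the right one, and the opening pigeonhole is standard. The genuine gap is exactly the step you flag as ``delicate'': extracting from the witness set $W=\{(a,a'-a): a<a'\in A\}$ a corner-closed, $(k-1)$-coloured structure to which the inductive hypothesis applies. This cannot be done by pigeonhole plus ``the additive structure of $A$''. Concretely, for $i<j<l$ the three witness points $(a_i,a_j-a_i)$, $(a_i,a_l-a_i)$, $(a_j,a_l-a_j)$ form an axis-parallel corner if and only if $a_i,a_j,a_l$ is a three-term arithmetic progression, and an affine, corner-closed copy of $[M]^2$ inside $W$ forces $A$ to contain an AP of length about $2M$; a set of density $1/k$ need not contain one of usable length without Szemer\'edi-type input, which is far too expensive here. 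If instead you only ask for a combinatorial product $B\times H$ with $B+H\subseteq A$, the K\H{o}v\'ari--S\'os--Tur\'an bound caps you at $|B|,|H|\approx\log N$ for generic $A$ (which would yield at best a $\log^{*}N$-type colour bound), and such a product is still not corner-closed since corners in it need $d\in(B-B)\cap(H-H)$. So the inductive step, launched from a single row, does not close.

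The argument of \cite{GS} (and the standard proof of this bound) avoids this with two changes you should adopt. First, start from an anti-diagonal $\{(b,t-b):b\in B\}$ rather than a row: a monochromatic pair $(b,t-b),(b',t-b')$ has \emph{both} of its corner completions, $(b,t-b')$ and $(b',t-b)$, inside the grid, so over a red subset $B'$ of the anti-diagonal the completions fill the entire product $B'\times(t-B')$ off its anti-diagonal --- a genuinely two-dimensional set coloured with the remaining $k-1$ colours. Second, strengthen the induction hypothesis from standard grids $[M]^2$ to symmetric products $B\times(t-B)$, which is the class closed under this operation: splitting $B'$ at its median into $B'_1\sqcup B'_2$ makes $B'_1\times(t-B'_2)$ entirely $(k-1)$-coloured, and an averaging choice of shift $u$ with $\bigl|B'_1\cap(B'_2-u)\bigr|\ge |B'_1||B'_2|/(2N)$ re-symmetrizes it. The cost per step is $|B|\mapsto |B|^2/(8k^2N)$, so the relative density squares at each step and one reaches $|B|\ge 2$ after $k$ steps provided $N\ge (8k^2)^{2^k}$, which is precisely the $k\lesssim\log\log N$ threshold you were aiming for.
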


In what follows we will see variants of the above mentioned problems. The next section is about saturated point sets of the integer grid, sets without corners (or squares) which are maximal, adding any further gridpoint will result a corner (or square). 

In Section \ref{dens} we summarize what are the best density results one can expect using the available techniques. Unfortunately we can't provide full proofs here, they are quite technical, but the arguments are hopefully complete enough that experts could reconstruct the proofs.

The last section is about related colouring problems, briefly addressing Euclidean Ramsey type problems, one of the many fields where Ron Graham has made significant impact. We close this introduction with a nice result of Ron, similar to problems we are going to consider in this paper, finding monochromatic right triangles in integer grids.

\begin{theorem}[Graham \cite{Gr2}]
    For any $r,$ there exists a positive integer $T(r)$ so that in any
$r$-coloring of the lattice points $\mathbb{Z}^2$ of the plane, there is always a monochromatic
right triangle with area exactly $T(r).$
\end{theorem}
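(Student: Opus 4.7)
The plan is to combine the Gallai--Witt theorem (multidimensional van der Waerden) with a highly divisible choice of target $T(r)$, and with the extra rotational freedom enjoyed by right triangles in $\mathbb{Z}^2$. I would first set $T(r) = (L!)^2/2$ for a sufficiently large $L = L(r)$, so that $2T(r) = (L!)^2$ is divisible by $t^2$ for every positive integer $t$ whose prime factors all lie below $L$.

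Next, apply Gallai--Witt in its finite quantitative form, inside a suitable box $[N(K,r)]^2$, to the grid configuration $F_K = \{0,1,\dots,K\}^2$ with $K = L!+1$. For any $r$-colouring of $\mathbb{Z}^2$ this yields a translation $c \in \mathbb{Z}^2$ and a dilation $t \in \mathbb{N}$ with $t \leq N(K,r)/K$ such that the sublattice $c + tF_K$ is monochromatic. Inside this sublattice, an axis-aligned right triangle with leg lengths $ta$ and $tb$ has area $\tfrac12 t^2 ab$, so one needs $t^2 ab = (L!)^2$ with $1 \leq a, b \leq K$. If $t \mid L!$, the choice $a = b = L!/t \leq L! \leq K$ does the job immediately. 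To gain flexibility one also considers tilted right triangles with legs along $tg(\alpha,\beta)$ and $tk(-\beta,\alpha)$ for coprime integers $\alpha,\beta$; these are monochromatic, fit in the sublattice under mild size constraints, and have area $\tfrac12 t^2 gk(\alpha^2+\beta^2)$, giving extra parameters that can be tuned to the divisibility structure of $(L!)^2$.

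The principal obstacle is that Gallai--Witt gives no sharp control on the dilation $t$, and in particular $t$ may contain prime factors larger than $L$, in which case $t^2 \nmid (L!)^2$ and no right triangle (axis-aligned or tilted) inside $c + tF_K$ can have area exactly $T(r)$ by the above recipe, since the area always carries a factor of $t^2$. Closing the argument therefore reduces to calibrating the parameter cascade $L \to K \to N$ so that every dilation $t$ output by Gallai--Witt automatically divides $L!$; concretely, one would want $L$ to exceed every prime up to $N(L!+1, r)/(L!+1)$. Because the Gallai--Witt bound $N(K,r)$ grows very fast in $K$, this step is delicate and constitutes the technical heart of the argument. It is likely to require either a more carefully tailored target $T(r)$ (say a highly composite number adapted a posteriori to the output of the Ramsey engine, rather than a naive factorial), or replacing Gallai--Witt by a multi-scale iterative variant that a priori localises $t$ to a suitable divisor set.
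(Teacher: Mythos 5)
The paper does not actually prove this statement---it is quoted from Graham's paper \emph{On partitions of $\mathbb{E}^n$} [Gr2] without proof---so the comparison below is with Graham's argument rather than with anything in the text. Your proposal is not yet a proof, and the hole you candidly flag at the end is not a technicality but a fatal, structural obstruction to the whole approach. Once Gallai--Witt hands you a monochromatic homothetic copy $c+tF_K$, \emph{every} right triangle with vertices in that copy---axis-aligned or tilted---has area of the form $\tfrac{t^2}{2}\,gk(\alpha^2+\beta^2)$, i.e.\ a multiple of $t^2/2$. Since the dilation $t$ is bounded only by $N(K,r)/K$ and can in particular be a prime larger than any quantity fixed in advance, no predetermined target $T(r)$ can be a multiple of $t^2/2$ for all colourings. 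Your suggested repair, choosing $L$ to exceed every prime up to $N(L!+1,r)/(L!+1)$, is circular: enlarging $L$ enlarges $K=L!+1$ and hence $N(K,r)$, which grows far faster than $L$, so the required inequality is never satisfiable (already $N/K\ge K-1>L$ trivially). Choosing $T$ ``a posteriori'' is not permitted by the statement, and no multi-scale iteration of Gallai--Witt removes the $t^2$ factor, because the approach keeps both legs of the triangle locked to the same uncontrolled dilation.

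The missing idea is to break the symmetry between the two legs so that only \emph{one} of them carries the uncontrolled Ramsey parameter, while the other is a free integer that can be solved for exactly. This is how Graham's proof works: apply van der Waerden along a single horizontal line (or to the super-colouring of columns) to obtain a monochromatic arithmetic progression $a,a+d,\dots,a+kd$ on $y=Y$ in some colour $c$, with $d$ bounded by a van der Waerden number $w$ fixed \emph{in advance}. Taking $2T$ divisible by $\mathrm{lcm}(1,\dots,kw)$ guarantees $jd\mid 2T$ for every $1\le j\le k$, and then the axis-parallel right triangle with vertices $(x,Y)$, $(x+jd,Y)$, $(x,Y+2T/(jd))$ has area exactly $T$; the vertical leg $2T/(jd)$ is computed \emph{after} $d$ is revealed, which is exactly the freedom your construction lacks. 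If all the candidate apex points $(a+id,\,Y+2T/(jd))$ avoid colour $c$, they form progressions of the same difference coloured with only $r-1$ colours, and a colour-focusing induction on $r$ (with $T(r)$ chosen highly divisible relative to the whole cascade of van der Waerden numbers) finishes the proof. I would encourage you to rebuild your argument around this one-dimensional van der Waerden step plus focusing, rather than around a two-dimensional Gallai--Witt step, which cannot be made to work here.
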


\medskip

\begin{figure}
    \centering
    \includegraphics[scale=.2]{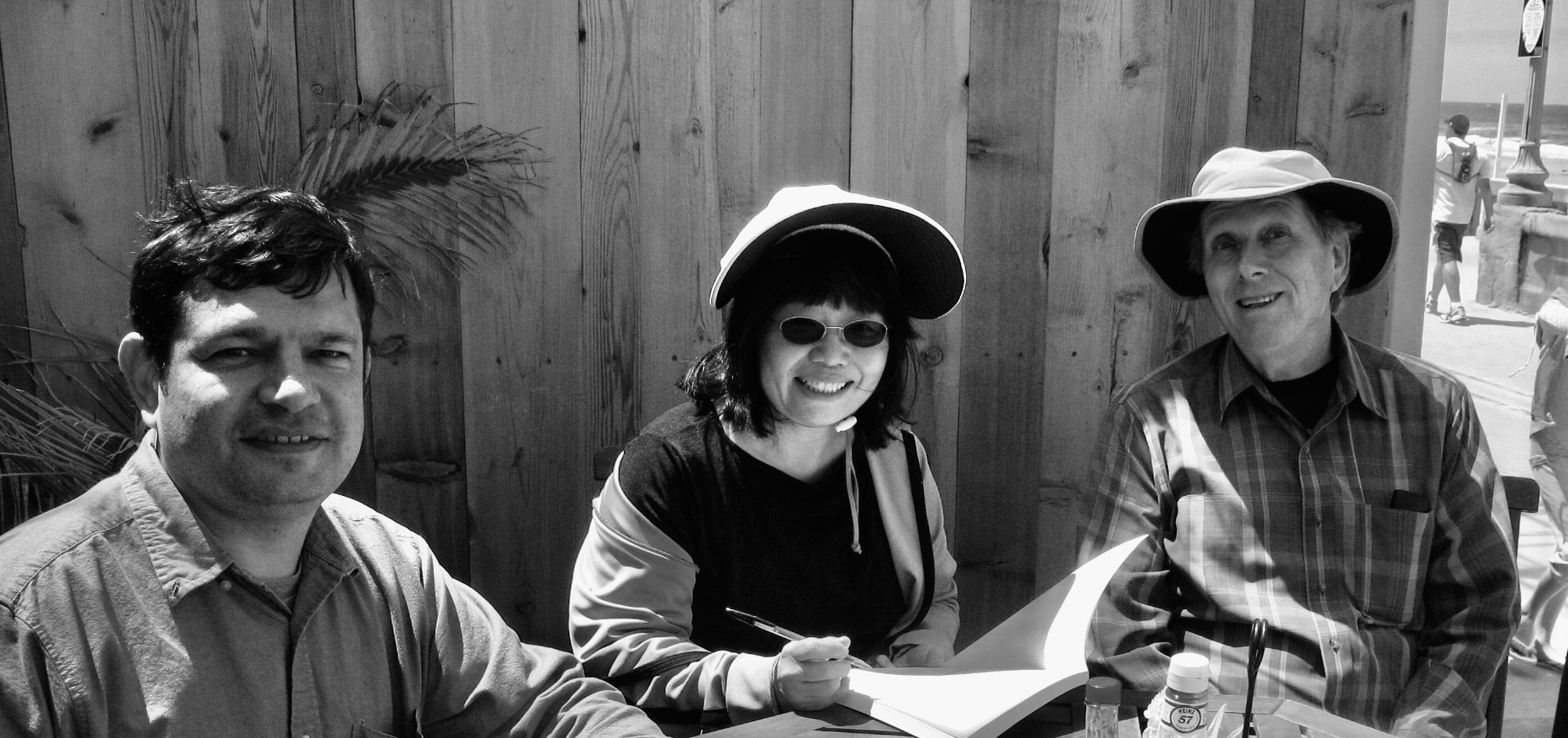}
    \caption*{Ron Graham, Fan Chung and Jozsef Solymosi}
    \label{fig:my_label}
\end{figure}

\section{Square Saturated point sets}
For technical reasons here and in future sections we often switch between integer grids, $[n]\times[n]$ and planes over finite fields, $\mathbb{F}_p\times\mathbb{F}_p.$ 

The next definition we are going to use originates in graph theory. It goes back to a paper from 1964 by Erd\H{o}s, Hajnal and Moon \cite{EHM}. 

\begin{definition}
Given a graph $H,$ a graph $G$ is $H$-saturated if $G$ does not contain $H$ but the addition of an edge joining any pair of nonadjacent vertices of $G$ completes a copy of $H.$ The saturation number of $H,$ written $sat(n,H)$ is the minimum number of edges in an $H$-saturated graph with $n$ vertices (assuming $n\geq|V(H)|$).
\end{definition}

Similar definitions can be given for various combinatorial structures. Here we are going to use the definition for point sets in a plane. The point sets in the definition are subsets of a larger set, a {\em universe} $U,$ like an integer grid $[n]\times[n],$ or a plane over the finite field $\mathbb{F}_p.$
Problems of asking the saturation number for certain subsets of the integer grid $[n]\times[n],$ can be find as early as a paper of Erd\H{o}s and Guy \cite{EG} from 1970, but similar problems were probably considered earlier.

\begin{definition}
Given a point set  $Q,$ another point set $P$ is $Q$-saturated if $P$ does not contain $Q$ but the addition of any point outside of $P$ completes a similar copy of $Q.$ The saturation number of $Q,$ written $sat(U,Q)$ is the minimum number of points in a $Q$-saturated point set in $U.$
\end{definition}

Similarity here means that $Q$ is similar to $Q'$ if there is a transformation $T,$ given by translation rotation and scaling, such that $T(Q)=Q'.$ 

Let us denote the corner, three points with coordinates $(0,0),(1,0),(0,1),$ by $C,$ and the square, four points with coordinates $(0,0),(1,0),(0,1),(1,1),$ by  $Q.$

\begin{claim}
We have the following bounds on the saturation number for sets in $\mathbb{F}_p\times\mathbb{F}_p$ without (tilted) corners
\[
\frac{p}{\sqrt{3}} \leq sat(\mathbb{F}_p\times\mathbb{F}_p,C) \leq p.
\]
\end{claim}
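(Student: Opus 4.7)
The plan is to handle the two bounds separately: an explicit saturated line yields the upper bound, and a double-counting argument bounding corner-completions per pair yields the lower bound. Throughout I take $p$ odd and interpret the right angle and equal legs via the standard bilinear form $\langle u, v\rangle = u_1 v_1 + u_2 v_2$ on $\F_p^2$.

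For the upper bound I would take $L := \F_p \times \{0\}$. Since $L$ is collinear, it contains no triangle at all, in particular no corner. To check saturation, let $q = (x,y) \notin L$, so $y \neq 0$, and consider $a = (x+y, 0)$, $b = (x-y, 0) \in L$: a direct computation gives $(a - q) \cdot (b - q) = -y^2 + y^2 = 0$ and $|a - q|^2 = |b - q|^2 = 2y^2$, so $\{q, a, b\}$ is an isosceles right triangle with right angle at $q$ (and $a \neq b$ because $p$ is odd). This yields $sat(\F_p \times \F_p, C) \le p$.

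For the lower bound, suppose $P$ is $C$-saturated with $|P| = s$, and count triples $(\{a, b\}, q)$ with $\{a, b\} \subset P$, $q \in \F_p^2 \setminus P$, and $\{q, a, b\}$ an isosceles right triangle. Saturation forces each of the $p^2 - s$ points outside $P$ to contribute at least one such triple. On the other hand, for any fixed pair $\{a, b\}$ the number of completing vertices $q$ is at most $6$: the right angle can sit at $a$, at $b$, or at $q$, and in each of these three cases the two placements of $q$ are obtained by applying the rotation $v \mapsto (-v_2, v_1)$ (or its negative) to a suitable vector, using that this map is an isometry of $x_1^2 + x_2^2$ on $\F_p^2$. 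Combining, $6 \binom{s}{2} \ge p^2 - s$, which simplifies to $3 s^2 \ge p^2$ and therefore $s \ge p / \sqrt{3}$.

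The main obstacle is justifying the ``at most $6$'' count in the finite-field setting, where geometric intuition is limited: it amounts to the algebraic fact that for any fixed nonzero $u \in \F_p^2$, the system $v \cdot u = 0$, $|v|^2 = c$ has at most two solutions $v$. Once the orthogonal line to $u$ is parametrised by a single scalar, this reduces to a quadratic in one variable with at most two roots, and feeds uniformly into each of the three right-angle cases.
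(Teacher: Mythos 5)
Your proposal is correct and takes essentially the same route as the paper: the upper bound comes from a saturated coordinate line (the paper uses the vertical line $\{(0,i):i\in\F_p\}$ with completing pair $(0,b),(0,a+b)$, while you use the horizontal line and place the right angle at the outside point --- the same idea), and the lower bound is the identical double count $p^2-|S|\le 6\binom{|S|}{2}$. Your case analysis justifying the constant $6$ (right angle at $a$, at $b$, or at $q$, with two rotational placements each) is just a more explicit rendering of the paper's remark that two points are vertices of three distinct squares and hence of at most six corners.
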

\begin{proof}
Let $S$ be a corner saturated set.  Two elements of $S$ are vertices of three distinct squares, so there are six points which could form a corner with the two elements. There are $p^2$ elements of $\mathbb{F}_p\times\mathbb{F}_p,$ so 
\[
p^2-|S|\leq 6\binom{|S|}{2}
\]
providing the lower bound. The upper bound is a simple construction. Set $$S=\{(0,i) : i\in\mathbb{F}_p\}.$$
Any point outside $S$ with coordinates $(a,b)$ would form a corner with $(0,b),(0,a+b)\in S.$ (Also with $(0,b),(0,b-a)\in S.$)
\end{proof} \qed

\medskip
Both bounds hold in $[n]\times [n]$ as well. It would be interesting to find the sharp bound, or even just a construction in $\mathbb{F}_p\times\mathbb{F}_p$ where $|S|\leq p-1.$

\medskip 
Before stating our next result, we recall a nice result of Katz and Tao \cite{KT} which will be our main tool bounding $sat(U,Q).$ It gives a nontrivial bound on a basic quantity in additive combinatorics.

\medskip

\begin{theorem}[Katz-Tao \cite{KT}]\label{K-T}
Let $A, B,$ be finite subsets of a torsion-free abelian group, and let 
\[
G\subset A\times B \quad \text{be such that}\quad |A|,|B|,|\{a+b:(a,b)\in G\}|\leq N.
\]
Then $|\{a-b:(a,b)\in G\}|\leq N^{11/6}.$
\end{theorem}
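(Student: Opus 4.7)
The plan is to approach this via Plünnecke–Ruzsa-style inequalities adapted to the bipartite (graph) setting of a restricted sumset. As a first preprocessing step I would pass from $G$ to a degree-regular subgraph $G' \subseteq G$ by dyadic pigeonholing, so that all $A$-side degrees lie within a factor of two of some common value, and likewise all $B$-side degrees. The logarithmic losses this introduces are cheerfully absorbed into the slack between the trivial exponent $2$ and the target exponent $11/6$.

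The heart of the argument exploits the hypothesis $|A +_G B| \le N$ through Cauchy–Schwarz: if $r(s) = |\{(a,b) \in G : a+b = s\}|$ then $\sum_s r(s) = |G|$ and hence the count of additive quadruples $((a_1,b_1),(a_2,b_2)) \in G \times G$ with $a_1 + b_1 = a_2 + b_2$ is at least $|G|^2/N$. Rewriting each such coincidence as $a_1 - a_2 = b_2 - b_1$ deposits a sizeable common set of differences inside both $A-A$ and $B-B$, which in turn gives nontrivial control on a "partial Ruzsa distance'' between $A$ and $B$. Pairing this with Ruzsa's triangle inequality $|X-Y|\,|Z| \le |X+Z|\,|Y+Z|$, applied with $Z$ chosen to be a dense portion of an iterated partial sumset such as $A +_{G_{1}} (A +_{G_{2}} B)$ for suitably defined auxiliary graphs $G_{i}$ on the relevant vertex sets, one arrives at an inequality of the form $|A -_{G} B| \le N^{\alpha}$ with $\alpha$ determined by how many iterations are performed and over which subgraphs.

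The main obstacle is extracting precisely the Katz–Tao exponent $11/6$ rather than the naive $2$ that falls out of a single Plünnecke–Ruzsa application. Overcoming this requires two ingredients: first, a genuinely quantitative use of torsion-freeness — for instance via the fact that iterated sumsets in a torsion-free abelian group cannot wrap around and so satisfy sharper Plünnecke-type estimates than in the general case; and second, a two-step covering iteration in which one first bounds a partial sum of sums, then converts the resulting bound back into control on partial differences, optimising over a free parameter that measures the fraction of $G$ retained at each stage. The exponent $11/6 = 2 - 1/6$ emerges as the optimum of this balancing act, and carrying the optimisation out cleanly while bookkeeping all the partial graphs $G_{i}$ is the genuinely hard part of the proof.
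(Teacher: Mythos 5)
A preliminary remark: the paper does not prove this statement at all --- it is quoted as an external tool from Katz and Tao \cite{KT} --- so your proposal can only be measured against the original argument, not against anything in this text.

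Measured that way, there is a genuine gap, and it sits exactly where the theorem's content lies: the exponent $11/6$ is never derived. You assert that it ``emerges as the optimum of a balancing act'' over unspecified auxiliary graphs $G_i$ and an unspecified number of iterations, and you concede that carrying this out is ``the genuinely hard part.'' Since the trivial bound is $N^2$ and the whole point of the theorem is the improvement to $N^{11/6}$, deferring precisely that computation means no proof has been given. Worse, the tools you invoke do not apply in the form you use them. Pl\"unnecke--Ruzsa and the triangle inequality $|X-Y|\,|Z|\le |X+Z|\,|Y+Z|$ control \emph{complete} sumsets, whereas the hypothesis only bounds the graph-restricted sumset $\{a+b:(a,b)\in G\}$; partial sumsets satisfy no such inequalities, and converting partial control into complete control (say via Balog--Szemer\'edi--Gowers) costs polynomial factors that destroy any hope of reaching $11/6$. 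This mismatch is precisely the obstacle the Katz--Tao argument exists to circumvent, so it cannot be waved away. Your Cauchy--Schwarz step is also hollow: $|G|$ admits no useful lower bound (one may as well retain a single pair per difference, so $|G|$ could equal the very quantity being bounded), so the energy bound $|G|^2/N$ gives nothing. Finally, your proposed use of torsion-freeness (``sharper Pl\"unnecke-type estimates'') is misplaced --- Pl\"unnecke--Ruzsa holds in every abelian group; Katz and Tao need torsion-freeness so the group embeds in a $\mathbb{Q}$-vector space and identities involving halving, such as recovering $a$ and $b$ from $a+b$ and $a-b$, are legitimate.

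For comparison, the actual Katz--Tao proof is an elementary iterated refinement: one fixes a representative pair $(a(d),b(d))\in G$ for each difference $d$, repeatedly passes to popular subsets, and constructs explicit maps with fibres of controlled size which show, in effect, that a typical sextuple of differences is determined by eleven quantities each ranging over a set of size at most $N$, whence $|\{a-b:(a,b)\in G\}|^6\lesssim N^{11}$. No Pl\"unnecke-type machinery enters. If you want to salvage your outline, the honest route is to reproduce that injection scheme rather than to hope a sum--set calculus designed for complete sumsets can be bent to the restricted setting.
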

The $11/6=1.833\ldots$ exponent is not known to be sharp. A lower bound follows from a variant of a construction of Ruzsa \cite{Ruzsa} showing that the difference set can be as large as $N^{\log(6)/ \log(3)} = N^{1.63093\ldots}.$

\begin{theorem}
Let $p$ be a prime $p\equiv 3 \pmod 4.$ Then
$sat(\mathbb{F}_p\times\mathbb{F}_p,Q)\geq p^{12/11}-p^{3/5},$ i.e. every set which is square-saturated in $\mathbb{F}_p\times\mathbb{F}_p$ has size much larger than the obvious bound, $p.$
\end{theorem}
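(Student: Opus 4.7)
The plan is to encode the saturation hypothesis as an additive-combinatorial condition on $S$ and then apply the Katz--Tao inequality (Theorem \ref{K-T}).

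Since $p \equiv 3 \pmod 4$, the element $-1$ is a quadratic non-residue in $\mathbb{F}_p$, so we may identify the grid $\mathbb{F}_p \times \mathbb{F}_p$ with the quadratic extension $\mathbb{F}_{p^2} = \mathbb{F}_p[i]$ as additive groups. Under this identification the counterclockwise rotation by $90^\circ$ corresponds to multiplication by $i$, and a quadruple $\{x, s_1, s_2, s_3\} \subset \mathbb{F}_{p^2}$ forms the vertices of a non-degenerate square with $s_2$ opposite $x$ precisely when $x + s_2 = s_1 + s_3$ (diagonals bisect each other) and $s_1 - s_3 = i(x - s_2)$ (diagonals are perpendicular and of equal length). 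Solving this linear system for $x$ and $s_2$ gives
\begin{equation*}
    x = \frac{(1-i) s_1 + (1+i) s_3}{2}, \qquad s_2 = \frac{(1+i) s_1 + (1-i) s_3}{2}.
\end{equation*}
If $S$ is square-saturated, then for every $x \notin S$ one can find an ordered pair $(s_1, s_3) \in S \times S$ for which $x$ is given by the left-hand formula while $s_2$ also lies in $S$.

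Next I would set
\begin{equation*}
    A := (1-i) \cdot S, \qquad B := -(1+i) \cdot S, \qquad G := \bigl\{(a,b) \in A \times B : \tfrac{i}{2}(a + b) \in S \bigr\}.
\end{equation*}
Writing $a = (1-i) s_1$ and $b = -(1+i) s_3$, direct computation gives $\tfrac{i}{2}(a+b) = s_2$ and $a - b = 2x$. Consequently $|A| = |B| = |S|$; the sumset $\{a + b : (a,b) \in G\}$ is contained in $-2i \cdot S$ and has cardinality at most $|S|$; and by saturation the difference set $\{a - b : (a,b) \in G\}$ contains $2 \cdot (\mathbb{F}_{p^2} \setminus S)$, which has cardinality $p^2 - |S|$. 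Applying Theorem \ref{K-T} with $N = |S|$ then yields
\begin{equation*}
    p^2 - |S| \leq \bigl| \{a - b : (a,b) \in G\} \bigr| \leq |S|^{11/6}.
\end{equation*}
A short rearrangement of $|S|^{11/6} + |S| \geq p^2$ produces the stated bound $|S| \geq p^{12/11} - p^{3/5}$, with substantial room to spare.

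The principal technical obstacle is that Theorem \ref{K-T} is formulated for torsion-free abelian groups, whereas $\mathbb{F}_{p^2}$ has characteristic $p$. The natural remedy is to lift $S$ through a bounded fundamental domain of $\mathbb{Z}[i]/(p)$ to a set $\tilde S \subset \mathbb{Z}[i]$ of the same size, so that the multiplications by the Gaussian integers $1 \pm i$ and the resulting sum- and difference-set computations all take place inside the torsion-free ring $\mathbb{Z}[i]$. The discrepancy between computing these sets in $\mathbb{Z}[i]$ versus in $\mathbb{F}_{p^2}$ amounts to only $O(1)$ wrap-around cosets of $p\mathbb{Z}[i]$, and the modest loss it incurs is easily absorbed by the conservative error term $p^{3/5}$ in the final inequality.
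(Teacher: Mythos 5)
Your argument is essentially the paper's own proof: both identify $\mathbb{F}_p\times\mathbb{F}_p$ with $\mathbb{F}_p[i]$, recover the vertex opposite a missing point from the two diagonal vertices via the coefficients $\frac{1\pm i}{2}$, take $A$ and $B$ to be dilates of $S$ by $1\pm i$ with $G$ the graph of diagonal pairs whose opposite vertex lies in $S$, and feed the resulting sum- and difference-sets into Theorem \ref{K-T} to obtain $p^2-|S|\leq |S|^{11/6}$ (your normalization of $A$, $B$, $G$ differs only cosmetically). Your closing remark about the torsion-free hypothesis is, if anything, more careful than the paper, which applies the Katz--Tao bound in $\mathbb{F}_p\times\mathbb{F}_p$ directly without comment.
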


\begin{proof}
    In this case we can write the elements of $\mathbb{F}_p\times\mathbb{F}_p$ similar to Gaussian integers. We can work on the field $F=\{a+ib : a,b\in\mathbb{F}_p\}.$ Multiplying by $i$ is a rotation by $90$ degrees, so tilted corners are given by $\alpha,\beta,\gamma$ triples where
    
    \[\alpha=(a+ib),\beta=(c+id), \gamma=\alpha+i(\alpha-\beta).\]
 The key observation is that 
 \[
 \alpha=\frac{1+i}{2}\beta+\frac{1-i}{2}\gamma\quad \text{and} \quad -i\left(\frac{1+i}{2}\beta-\frac{1-i}{2}\gamma\right)=\beta+i(\alpha-\beta)
 \]
which is the fourth point of the square determined by $\alpha, \beta, \gamma.$  If $S$ is $Q$-saturated then every point outside $S$ is the fourth point of a square with the other three points in $S.$ We know that $|S|=o(p^2)$ from Theorem \ref{Corner} but here we can simply assume that $|S|\leq p^{12/11}$ otherwise we are done.
We have at least $p^2-p^{12/11}$ points outside of $S,$ all are fourth corners of a square with 3 vertices in $S.$ Let us define a graph $G$ with vertex set $S$ and two elements $(\beta,\gamma)$ are edges iff they are diagonals of a corner. 
Let us consider the sets $A=(1+i)S, B=(1-i)S,$ and a graph $G',$ defined on $A\times B$ as $(a,b)\in G'$ if and only if $(a/(1+i),b/(1-i))\in G.$
With these definitions we have $\{a+b : (a,b)\in G'\}\subset 2S,$ and $|\{a-b : (a,b)\in G'\}|\geq p^2-p^{12/11}.$ We can apply Theorem \ref{K-T} with $N=|S|,$ so 
$p^2-p^{12/11}\leq |S|^{11/6}$ giving the desired bound.
\end{proof}
\qed

\medskip

Note that we didn't use that $S$ was square free, all we used is that any point outside of $S$ would form a square with a corner in $S.$ The very same proof works for $[n]\times[n]$ using Gaussian integers.

\begin{theorem}
    If $S\subset [n]\times[n]$ has the property that for any $a\in \left\{ [n]\times[n]\setminus S \right\}$ there are three elements in $S,$ which form a square with $a,$ then $|S|\geq n^{12/11}+o({n}).$
\end{theorem}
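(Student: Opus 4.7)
The plan is to imitate the preceding proof almost verbatim, replacing the Gaussian field over $\mathbb{F}_p$ by the Gaussian integers $\mathbb{Z}[i]$. Identifying $[n]\times[n]$ with the box $\{a+bi : 0\le a,b<n\}\subset\mathbb{Z}[i]$, which lies in the torsion-free abelian group $\mathbb{Z}[i]\cong \mathbb{Z}^2$, opens the door to Theorem~\ref{K-T}. The same algebraic identities remain valid in $\mathbb{Z}[i]$: if $\beta,\gamma$ form one diagonal of a (possibly tilted) square, the opposite vertices are
\[
\alpha=\frac{1+i}{2}\beta+\frac{1-i}{2}\gamma,\qquad x=\frac{1-i}{2}\beta+\frac{1+i}{2}\gamma,
\]
so clearing denominators gives the Gaussian-integer relations $(1+i)\beta+(1-i)\gamma=2\alpha$ and $(1+i)\beta-(1-i)\gamma=2ix$.

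By hypothesis every $x\in [n]\times[n]\setminus S$ is the fourth vertex of some square whose other three vertices $\alpha,\beta,\gamma$ lie in $S$; I select $\beta,\gamma$ to be the diagonal opposite to $x$ and form a graph $G$ on $S$ with these pairs as edges. Setting $A=(1+i)S$, $B=(1-i)S$, and $G'\subset A\times B$ defined by $((1+i)\beta,(1-i)\gamma)\in G'\Leftrightarrow (\beta,\gamma)\in G$, the identities above give $a+b=2\alpha\in 2S$ and $a-b=2ix$ along every edge of $G'$. Since $1\pm i$, $2$, and $i$ are non-zero in the integral domain $\mathbb{Z}[i]$, all the maps involved are injective; hence $|A|=|B|=|S|$, $|\{a+b:(a,b)\in G'\}|\le |2S|=|S|$, and $|\{a-b:(a,b)\in G'\}|$ equals the number of fourth corners, which is at least $n^2-|S|$. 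Theorem~\ref{K-T} with $N=|S|$ then yields $n^2-|S|\le |S|^{11/6}$, and solving this inequality gives $|S|\ge n^{12/11}+o(n)$.

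There is essentially no new obstacle here: the only structural input from the ambient ring is torsion-freeness, which $\mathbb{Z}[i]$ shares with $\mathbb{Z}$, so the pigeonholing lift to $A\times B$ transfers from the finite-field setting without modification. The only bookkeeping is inverting the relation $|S|^{11/6}+|S|\ge n^2$; a Taylor expansion around $|S|=n^{12/11}$ shows the deficit is at most $\tfrac{6}{11}n^{2/11}+o(n^{2/11})$, comfortably $o(n)$.
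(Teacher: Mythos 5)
Your proposal is correct and is exactly the route the paper intends: the paper's entire ``proof'' of this theorem is the remark that the preceding $\F_p[i]$ argument transfers verbatim to the Gaussian integers, and you carry that out faithfully (indeed a little more carefully, by clearing the non-invertible denominator $2$ and noting that $\Z[i]$ is genuinely torsion-free, so Theorem~\ref{K-T} applies without any caveat). The final inequality $n^2-|S|\le |S|^{11/6}$ and its inversion to $|S|\ge n^{12/11}-O(n^{2/11})$ are handled correctly.
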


\section{Maximum Corner-free Sets}\label{dens}


In the previous section we gave a bound on the smallest maximal corner free set, here we are going investigate what the size of the maximum corner free set is. This part is not self-contained. We collected references to techniques and analogous results which can be used to tackle our problem. To follow the arguments here, one should be familiar with Fourier methods used to deal with three- and four-term arithmetic progressions up to the level of use of Gowers norms. 
It was pointed out by the anonymous referee that Theorem \ref{t:ERT_upper} below was obtained in nice paper of Prendiville \cite[Corollary 1.3]{Prendiville} and improved in \cite[Theorem 2.21]{Bloom_PhD} by Bloom. 
Our proof below is similar to their work. The main goal is to give a better simple upper bound (on the density of sets without tilted corners) than what is known for axis parallel corners \cite{s_IAN}.

\begin{theorem}\label{Corner}
	Let $A \subseteq [n]^2$ be a set having no isosceles right triangles. Then $|A| = O(n^2/\log^{c_1} n)$. 
	Now if $A$ does not contain squares, then $|A| =  O(n^2/(\log \log n)^{c_2})$, where $c_1,c_2>0$ are some absolute constants. 
\label{t:ERT_upper} 
\end{theorem}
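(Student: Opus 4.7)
The plan is to pass to a finite-field model and run a Roth--Bourgain style Fourier density-increment argument, exploiting the Gaussian-integer structure that tilted corners make available. Embed $A \subseteq [n]^2$ into $\mathbb{F}_p \times \mathbb{F}_p$ for a prime $p$ with $n < p < 3n$; for notational convenience also impose $p \equiv 3 \pmod 4$, so that $\mathbb{F}_p[i] \cong \mathbb{F}_{p^2}$ is a field. An isosceles right triangle with right-angle vertex at $x$ is then a triple $\{x, x+v, x+iv\}$ with $v \in \mathbb{F}_p^2$, where multiplication by $i$ acts as the rotation $(a,b) \mapsto (-b,a)$. Choosing $p$ larger than $n$ keeps honest and ``wrapped-around'' corners in bijection, so it suffices to work with the count
\[
T(A) = \sum_{x,\, v \in \mathbb{F}_p^2} 1_A(x)\, 1_A(x+v)\, 1_A(x+iv),
\]
and to show $T(A)$ exceeds the trivial $v=0$ contribution $|A|$.

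The second step is the Fourier expansion on $\mathbb{F}_p^2$. Orthogonality and a change of summation variables collapse $T(A)$ to
\[
T(A) = \frac{1}{p^2} \sum_{\xi \in \mathbb{F}_p^2} \widehat{1_A}(\xi)\, \widehat{1_A}(L_1 \xi)\, \widehat{1_A}(L_2 \xi),
\]
where $L_1, L_2 : \mathbb{F}_p^2 \to \mathbb{F}_p^2$ are the linear maps induced by the coefficients $i$ and $1-i$ of the configuration, both invertible for odd $p$ (with determinants $1$ and $2$). The $\xi=0$ term gives the main contribution $\alpha^3 p^4$, where $\alpha = |A|/p^2$, while a single application of Cauchy--Schwarz and Plancherel bounds the remaining sum by $\|\widehat{1_A}\|_\infty \cdot |A|$. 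Hence if $A$ is tilted-corner-free and $\alpha \gg 1/p$, there is a nontrivial character $\xi_0$ with $|\widehat{1_A}(\xi_0)| \gtrsim \alpha^2 p^2$.

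The third step is a standard density-increment lemma. A large Fourier coefficient on $\mathbb{F}_p^2$ yields, through the usual Bohr-set construction, a Bohr set of bounded rank on which $A$ has relative density at least $\alpha(1+c\alpha)$, at the cost of shrinking by a factor polynomial in $\alpha$. Iterating the increment $O(\alpha^{-1})$ times before the density saturates produces $\alpha \lesssim (\log p)^{-c_1}$, and unwinding the embedding delivers $|A| = O(n^2/\log^{c_1} n)$, which is the asserted bound for the corner-free case.

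The square-free case follows the same skeleton, but the relevant pattern $\{x,\, x+v,\, x+iv,\, x+(1+i)v\}$ is of complexity two in the sense of Gowers--Wolf, so its count is controlled by the $U^3$-norm rather than by a single Fourier coefficient. One must therefore invoke a quadratic inverse theorem together with an arithmetic regularity lemma along the lines of Shkredov's treatment of axis-parallel corners in \cite{s_IAN}. This square case is the main obstacle: carefully tracking the Bohr-set and regularity parameters through the quadratic density increment is precisely where the bound degrades from polylogarithmic to the weaker $O(n^2/(\log\log n)^{c_2})$.
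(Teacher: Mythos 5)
Your proposal is correct and follows essentially the same route as the paper: the paper's Lemma \ref{l:M_j} performs exactly your reduction (a generalized von Neumann / change-of-variables argument showing the three-point pattern is controlled by the $U^2$ norm, i.e.\ by a single Fourier coefficient, and the square pattern by $U^3$), after which both arguments appeal to Bourgain's Bohr-set density increment for the $\log^{c_1}$ bound and to the Gowers $U^3$ inverse machinery for the $(\log\log n)^{c_2}$ bound. The only real difference is cosmetic: you instantiate the $k=2$ Fourier step explicitly via the Gaussian-integer structure in $\F_p\times\F_p$, while the paper states the reduction abstractly for any family of distinct invertible $2\times 2$ matrices and cites the relevant literature for the iteration.
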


In order to prove the theorem we will see a more general statement, which shows that the estimates for Szemer\'edi's theorem on $k$-term arithmetic progressions can be extended to $k$-element point sets in dimension two. As we mentioned earlier, this part of the paper is not self-explanatory, the statements are heavily dependent on the contents of the cited papers.  

\begin{lemma}\label{lemma}
	Let $k\ge 2$ be a positive integer and  $M_1,\dots,M_{k}$ be $2\times 2$ invertible matrices, $M_i\neq M_j$, $i\neq j$. 
	Also, let $A\subseteq [n]^2$ be a set having no configurations $x,x+M_1 y, \dots, x+M_k y$. 
	Then there is $c_k>0$ such that 
\begin{equation}\label{f:M_j_1}
	|A| = O \left( \frac{n^2}{(\log \log n)^{c_k}} \right) \,,
\end{equation}
	and for $k=2$ there exists $c>0$ with
\begin{equation}\label{f:M_j_2}
	|A| = O \left( \frac{n^2}{(\log n)^{c}} \right) \,. 
\end{equation}
\label{l:M_j}
\end{lemma}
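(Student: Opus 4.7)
The plan is to adapt the density-increment scheme of Roth (for $k=2$) and Gowers (for $k \ge 3$) from $k$-term arithmetic progressions in $\Z$ to the matrix-coefficient configuration $x, x+M_1 y, \dots, x+M_k y$ in $\Z^2$. First I embed $A \subseteq [n]^2$ into $G := \Z_N^2$ for a prime $N$ of order $n$, losing only a constant factor in density; replacing $y$ by $M_1^{-1} y$ then lets me assume $M_1 = I$, so the configuration becomes $x, x+y, x+T_2 y, \dots, x+T_k y$ with $T_j := M_j M_1^{-1}$ distinct invertible $2 \times 2$ matrices (for all $N$ outside a finite exceptional set of primes).

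For $k=2$ I would run the standard two-dimensional Roth argument. Expanding the counting operator
\[
\Lambda(A) = \sum_{x,y \in G} 1_A(x)\, 1_A(x+y)\, 1_A(x+T_2 y)
\]
via the Fourier transform on $G$ yields $\Lambda(A) = \delta^3 N^4 + O\big(N^2 \sup_{\xi \ne 0} |\widehat{1_A}(\xi)|\big)$, so the absence of non-trivial configurations forces $\sup_{\xi \ne 0} |\widehat{1_A}(\xi)| \gtrsim \delta^2 N^2$. A two-dimensional Bohr-set density-increment (structurally identical to Bourgain's one-dimensional scheme, only with Bohr sets in $G$) then produces a subset on which $A$ has density $\delta + c\delta^2$, and iterating gives $\delta \ll (\log N)^{-c}$, proving (\ref{f:M_j_2}). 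For general $k$ one runs the same iteration with the Gowers $U^{k-1}(G)$-norm in place of a single Fourier coefficient: a generalized von Neumann inequality, obtained by $k-1$ applications of the Cauchy--Schwarz inequality in the $y$-variable, gives
\[
\Lambda_k(A) = \delta^{k+1} N^4 + O\big(N^4\, \|1_A - \delta\|_{U^{k-1}(G)}^{c}\big),
\]
and combining with Gowers's inverse theorem on $G$ and a Bohr-set increment yields $\delta \ll (\log\log N)^{-c_k}$, giving (\ref{f:M_j_1}).

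The main obstacle lies in the generalized von Neumann step: each Cauchy--Schwarz iteration requires matrices such as $T_i - T_j$ (and the iterated combinations arising from the substitutions $y \mapsto y + h$) to be invertible modulo $N$, but in dimension two, two distinct invertible matrices can have a rank-one difference. One handles this either by a careful choice of the order of the Cauchy--Schwarz steps, so that only nonsingular differences appear, or by a preliminary generic linear substitution on $y$ (using that the singular $2\times 2$ matrices form a codimension-one variety) to bring the configuration into general position. Once this linear-algebraic bookkeeping is in place the remainder of the argument transports cleanly from $\Z_N$ to $G$, and the quantitative bounds match those known for Szemer\'edi-type configurations in $\Z_N$.
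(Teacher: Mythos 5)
Your overall strategy --- transferring the one--dimensional density--increment machinery (Fourier analysis and Bohr sets for $k=2$, Gowers norms plus an inverse theorem for larger $k$) to the two--dimensional count $\sum_{\vec x,\vec y}A(\vec x)A(\vec x+M_1\vec y)\cdots A(\vec x+M_k\vec y)$ --- is the same as the paper's, and the bounds you target match \eqref{f:M_j_2} and \eqref{f:M_j_1}. One indexing slip: a $(k+1)$-point configuration is controlled by the $U^{k}$-norm, not $U^{k-1}$; your own $k=2$ case, which uses a single nontrivial Fourier coefficient (i.e.\ $U^2$), is consistent with $U^k$ and contradicts your stated $U^{k-1}$.

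The genuine gap is in the step you yourself identify as the main obstacle, and your proposed fixes do not close it. The paper does not run the generalized von Neumann argument by repeated Cauchy--Schwarz in the original variables $(\vec x,\vec y)$: following \cite[Proposition 5.3]{GT_primes} it first changes variables to $\vec x=\vec z_1+\cdots+\vec z_k$, $\vec y=-\sum_j M_j^{-1}\vec z_j$, after which $A(\vec x+M_i\vec y)$ becomes a function independent of $\vec z_i$, and the Gowers--Cauchy--Schwarz inequality applies to $\sum_{\vec z}A(\vec z_1+\cdots+\vec z_k)\prod_j f_j$ with no condition on the differences $M_i-M_j$ at that stage. Your alternative of a ``preliminary generic linear substitution on $y$'' provably cannot work: replacing $\vec y$ by $L\vec y$ turns $T_i-T_j$ into $(T_i-T_j)L$ (and a change of coordinates on the plane turns it into $L(T_i-T_j)$), so the rank of each difference is invariant and a singular difference stays singular under every linear substitution. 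Your other suggestion, reordering the Cauchy--Schwarz steps, is asserted without justification; when some $T_i-T_j$ has rank one the configuration genuinely degenerates along a direction, and the direct Cauchy--Schwarz scheme does require those differences to be invertible. The missing idea is precisely the substitution above (or an equivalent device). Note, though, that even that substitution requires the parametrization to be a uniform cover, which for $k=2$ is equivalent to the invertibility of $M_1-M_2$; this holds for the matrices needed in Theorem \ref{Corner}, so for the application it suffices to either adopt the paper's change of variables or restrict the lemma to systems with all pairwise differences $M_i-M_j$ invertible.
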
 
\begin{proof}
	Consider the quantity 
\[
	\sigma = \sum_{\vec{x},\vec{y}} A(\vec{x}) A(\vec{x}+M_1 \vec{y}) \dots A(\vec{x}+M_k \vec{y}) \,.
\]	
	Let us follow \cite[Proposition 5.3]{GT_primes} in the process  changing  the variables : $\vec{x} = \vec{z}_1+\dots+\vec{z}_k$, $\vec{x} + M_i \vec{y} = \sum_{j=1}^k (I - M_i M_j^{-1}) \vec{z}_j$,
	so $\vec{y} = -\sum_{j=1}^k M_j^{-1} \vec{z}_j$.  
	Since $M_i\neq M_j$, $i\neq j$ it follows that this is  a uniform cover. \footnote{ If
$A$ and $B$ are finite non-empty sets and  $\Phi : A \longrightarrow B$ is a map, then we say that $\Phi$
is a uniform cover of $B$ by $A$ if $\Phi$ is surjective and all the fibers $\{\Phi^{-1}(b) : b \in B\}$ have
the same cardinality.}
	Then $\sigma$ is expressed as 
\[
	\sigma = n^{-k+2} \sum_{\vec{z}_1, \dots, \vec{z}_k} A(\vec{z}_1+\dots+\vec{z}_k) \prod_{j=1}^k f_j (\vec{z}_1, \dots, \vec{z}_k) \,,
\]
	where the function $f_j$, $j\in [k]$ does not depend on $\vec{z}_j$. 
	Hence by the characteristic property of Gowers norms, we see that $\sigma$ is controlled by $U^{k}$--uniformity norm of $A$, see \cite{Gow_m}.  
	Notice also, that the quantity $\sigma$ is affine--invariant. 
	Applying the method from Bourgain's classical paper \cite{Bourgain_AP3_0.5} (or for a sharper bound follow \cite{BS_AP3}) for $k=2,$ and for $k>2$ following the steps of \cite{Gow_4}, \cite{Gow_m}, and \cite{GTZ_inverse}  we obtain a similar 
	bound as in the case of arithmetic progressions of length $k$. 
\end{proof} 
\qed

\bigskip

\noindent
Now we are ready to prove Theorem \ref{Corner} as an easy corollary of Lemma \ref{lemma}.

\medskip

\begin{proof} ({\em of Theorem \ref{Corner}})
	To calculate the number of isosceles right triangles we need to consider
\[
	\sum_{\vec{x},\vec{y}} A(\vec{x}) A(\vec{x}+\vec{y}) A(\vec{x}+\vec{y}^\perp) \,,
\]
	where $\vec{y}=(y_1,y_2)$ and $\vec{y}^\perp = (-y_2,y_1)$. 
	So, in terms of Lemma \ref{l:M_j}, we have

	\[
M_1=\begin{pmatrix}
    1  &  0      \\
    0  &  1      
\end{pmatrix}
\quad
M_2=\begin{pmatrix}
    0  &  -1      \\
    1  &  0      
\end{pmatrix} 
\]
	hence both matrices are invertible.
	In the case of squares  the correspondent quantity is  
\[
	\sum_{\vec{x},\vec{y}} A(\vec{x}) A(\vec{x}+(y_1,y_2)) A(\vec{x}+(-y_2,y_1)) A(\vec{x}+(y_1-y_2,y_1+y_2))  \,,
\]
	and hence 
		\[
M_3=\begin{pmatrix}
    1  &  -1      \\
    1  &  1      
\end{pmatrix}
\]
is invertible as well so we can apply Lemma \ref{lemma}.
\end{proof}
\qed

\begin{remark}
	As we have seen the case of squares corresponds to arithmetic progressions of length four and in this particular case the result can be improved further following the work of Green and Tao in \cite{GT_AP4}.
	Also, it will be interesting to improve Bloom's bound (see \cite{Bloom_PhD})  $|A|=O(n^2/\log^{1-o(1)} n)$ for the maximal size of $A$ having no isosceles right triangles to $|A|=O(n^2/\log^{1+c} n)$, $c>0$, using methods from \cite{BS_AP3}.
\end{remark}

\section{Colouring Problems}\label{col}
In this section we show two results from Euclidean Ramsey theory related to corners. These results follow almost directly from a more general result of the first author's paper {\em ``On some problems of Euclidean Ramsey theory''} \cite{s_Ramsey}. As in the previous section, we are not going to include the details, however we give enough references so that with the cited paper the full proof can be recovered. As we stated in Theorem \ref{GrSo}, colouring the integer grids with few colours results a monochromatic axis parallel corner. Using two colours and relaxing the axis parallel condition will give many monochromatic corners. The systematic investigation of monochromatic triangles in two--colouring of $\mathbb{E}^2$ started in the third paper of the fundamental sequence of papers titled {\em ``Euclidean Ramsey Theorems I. II. III.''} \cite{EGMRSS}.
The next result \cite[Corollary 6]{s_Ramsey} shows that  two--colouring of $\F_p \times \F_p$ always gives as many monochromatic corners as one would expect.

\begin{theorem}[Shkredov \cite{s_Ramsey}]
    Let $p$ be a sufficiently large prime number.
    Then for any two--coloring of the plane $\F \times \F_p$ and any $a,b \neq 0$
    such that
    $a/b$ is a quadratic residue
    there is a monochromatic
    collinear triple $\{ x, y, z\}$
    with
    $\| y-x\| = a$, $\| z-y\| = b$.
\end{theorem}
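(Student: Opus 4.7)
The plan is to count monochromatic collinear triples with the prescribed ``distances'' as a weighted sum over the circle
\[
S_a \;:=\; \{\, v \in \F_p^2 : \|v\|^2 = a \,\},
\]
and to control this count by Fourier analysis on $\F_p^2$, using the classical estimate $\max_{\xi \neq 0} |\widehat{\mathbf{1}_{S_a}}(\xi)| \ll \sqrt{p}$ (a Kloosterman/Sali\'e-type bound); note that $|S_a| = p + O(1)$.

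First, I reformulate the configuration. Since $a/b$ is a quadratic residue, fix $\lambda \in \F_p^*$ with $\lambda^2 = b/a$. Every collinear triple $(x,y,z)$ with $\|y-x\|^2 = a$ and $\|z-y\|^2 = b$ has the form $y = x+v$, $z = x + (1+\lambda) v$ for some $v \in S_a$ (and taking $-\lambda$ produces the other orientation). Let $f_1, f_2$ be the indicators of the two colour classes, with densities $\alpha_1 + \alpha_2 = 1$. It suffices to show
\[
T \;:=\; \sum_{i=1}^{2} \sum_{x \in \F_p^2} \sum_{v \in S_a} f_i(x)\, f_i(x+v)\, f_i\bigl(x+(1+\lambda) v\bigr) \;>\; 0.
\]

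Next, split $f_i = \alpha_i + g_i$ with $\sum_x g_i(x) = 0$ and expand the triple product. The constant-density contribution equals $(\alpha_1^3 + \alpha_2^3)\, |S_a|\, p^2 \ge \tfrac{1}{4} p^3 + O(p^2)$ by the elementary convexity inequality $\alpha_1^3 + \alpha_2^3 \ge 1/4$ on $\alpha_1 + \alpha_2 = 1$. The remaining terms contain at least two factors of some $g_i$; passing to the Fourier side on $\F_p^2$, each such term takes the shape of a bilinear or trilinear sum weighted by $\widehat{\mathbf{1}_{S_a}}$ evaluated at linear combinations of spectral variables. The $\xi = 0$ contribution vanishes since $g_i$ has mean zero, and the $\sqrt{p}$ bound on the remaining spectrum, combined with Plancherel and $\|g_i\|_2^2 \le p^2$, bounds each error term by $O(\sqrt{p} \cdot p^2) = O(p^{5/2}) = o(p^3)$. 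Therefore $T \gg p^3$, and monochromatic triples exist in abundance.

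The main technical obstacle is the degenerate case $\lambda = -1$ (that is, $b = a$), where $1+\lambda = 0$ collapses the triple; this is dispatched by using the other square root, $z = x + (1-\lambda)v$, since $1-\lambda = 2 \neq 0$ for odd $p$. A secondary subtlety is the purely trilinear $g_i^3$ term, which a priori requires more than one $\sqrt{p}$ gain; the standard three-term Roth-type trick — a Cauchy--Schwarz step — reduces it to a bilinear sum again controlled by $\max_{\xi \neq 0} |\widehat{\mathbf{1}_{S_a}}(\xi)|$. Beyond these two points, the argument is the quadratic-form analogue of the familiar Roth-style counting on $\F_p^2$ and mirrors the general framework in \cite{s_Ramsey}.
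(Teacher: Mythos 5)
Your overall skeleton --- parametrising the triples as $x,\ x+v,\ x+(1+\lambda)v$ with $v$ on the circle $S_a$, extracting the main term $(\alpha_1^3+\alpha_2^3)\,|S_a|\,p^2\ge \tfrac14 p^3+O(p^2)$, and killing the bilinear error terms with the Kloosterman-type bound $\max_{\xi\neq 0}|\widehat{\mathbf{1}_{S_a}}(\xi)|\ll\sqrt p$ --- is exactly the argument the paper imports from \cite{s_Ramsey}. The gap is in your ``secondary subtlety'', the trilinear term
$T_3=\sum_{x}\sum_{v\in S_a}g_i(x)g_i(x+v)g_i(x+\mu v)$ with $\mu=1+\lambda$. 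On the Fourier side this is
$p^{-4}\sum_{\xi,\eta}\widehat{g_i}(-\xi-\eta)\,\widehat{g_i}(\xi)\,\widehat{g_i}(\eta)\,\widehat{\mathbf{1}_{S_a}}(\xi+\mu\eta)$,
and on the plane of frequencies with $\xi+\mu\eta=0$ the circle contributes $\widehat{\mathbf{1}_{S_a}}(0)=|S_a|\approx p$, not a $\sqrt p$ saving. That degenerate piece equals $\frac{|S_a|}{p^2}\sum_{x,v\in\F_p^2}g_i(x)g_i(x+v)g_i(x+\mu v)$, an \emph{unrestricted} count over the translation-invariant linear pattern with coefficients $\lambda,\,-(1+\lambda),\,1$ (which sum to zero); no Cauchy--Schwarz against $\max_{\xi\neq0}|\widehat{\mathbf{1}_{S_a}}(\xi)|$ touches it, because the circle has already disappeared from it. And it is genuinely large for some colourings: for the strip colouring $R=\{x:\theta\cdot x\in I\}$ with $I$ an interval of length about $p/2$, this piece is of order $p^3$, comparable to the main term. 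So the claim that each colour's trilinear term is separately $O(p^{5/2})$ is false, and the proof as written does not close.

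What rescues the argument --- and what the paper's identity \eqref{f:sigma} is encoding, since only $\sigma(\delta_R,f_R,f_R)$ and $\sigma(\delta_B,f_B,f_B)$ survive there as error terms --- is a cancellation special to two colours: $R$ and $B$ partition the plane, so $f_B=-f_R$ and hence
\begin{equation*}
\sigma(f_R,f_R,f_R)+\sigma(f_B,f_B,f_B)=0
\end{equation*}
identically; one never needs to bound the trilinear terms at all, only their sum. The terms linear in $g_i$ vanish by mean zero, and the genuinely bilinear terms $\sigma(\delta,f,f)$ are $O(p^{5/2})$ by your Kloosterman bound, because there the only frequency at which $\widehat{\mathbf{1}_{S_a}}$ is evaluated degenerately is $\xi=0$, where $\widehat{f}$ itself vanishes. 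Replace your Cauchy--Schwarz step by this exact cancellation and the rest of your write-up (including the correct handling of $\lambda=-1$ via the other square root) goes through.
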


Actually, by the arguments of the proof of \cite[Theorem 4]{s_Ramsey} we consider $\sigma (R,R,R)$, $\sigma(B,B,B)$ the number of ERT at each colour $R\bigsqcup B = \F_p \times \F_p$ and obtain 
\[
    \sigma (R,R,R) + \sigma (B,B,B)
    = 
\]
\begin{equation}\label{f:sigma}
    = p^{-3}(|R|^3 + |B|^3) + 
    \sigma (R,R,R) + \sigma (B,B,B) +
    3\sigma (\d_R,f_R,f_R) + 3\sigma (\d_B,f_B,f_B) \,,   
\end{equation}
where $f_R (x) = R(x) - |R|/p^2$,
$f_B(x) = B(x) - |B|/p^2$ are the balanced functions of the colours $B$ and $R$, correspondingly. 
As was showed in \cite[Theorem 4]{s_Ramsey} the terms $\sigma (\d_R,f_R,f_R)$, $\sigma (\d_B,f_B,f_B)$ in \eqref{f:sigma} are negligible thanks to the bound for the Kloosterman sums and hence 
\[
    \sigma (R,R,R) + \sigma (B,B,B) 
    = p^{-3} (|R|^3 + |B|^3) + O(p^{5/2}) 
    \ge p^3/4 - C p^{5/2} \,,  
\]
where $C>0$ is an absolute constant. 
As a consequence we obtain 	

\begin{theorem}
	Let $p$ be a prime number. 
	Then for any two--colouring of $\F_p \times \F_p$ the number of monochromatic isosceles right triangles is at least 
\[
	\frac{p^3}{4} + O(p^{5/2}) .
\]
	
\end{theorem}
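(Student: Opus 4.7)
The plan is to use the decomposition already displayed just above the theorem and then bound its two ingredients separately. Given a two-colouring $R \sqcup B = \F_p \times \F_p$, write $R = \delta_R + f_R$ and $B = \delta_B + f_B$, where $\delta_R = |R|/p^2$, $\delta_B = |B|/p^2$ are the densities and $f_R, f_B$ are the corresponding balanced functions, and set
$$\sigma(u,v,w) = \sum_{\vec{x},\vec{y} \in \F_p \times \F_p} u(\vec{x})\, v(\vec{x}+\vec{y})\, w(\vec{x}+\vec{y}^{\perp}).$$
The total number of monochromatic isosceles right triangles is then $\sigma(R,R,R) + \sigma(B,B,B)$.

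First I would expand $\sigma(R,R,R) + \sigma(B,B,B)$ by multilinearity in $R = \delta_R + f_R$ and $B = \delta_B + f_B$. The terms linear in a single balanced function vanish, because $\sum f_R = \sum f_B = 0$ over any translate and the three slots of $\sigma$ are independent. The pure cubic balanced contributions $\sigma(f_R, f_R, f_R) + \sigma(f_B, f_B, f_B)$ cancel, since $f_R + f_B \equiv 0$ on $\F_p \times \F_p$ forces $f_R = -f_B$. The pure density contribution sums to $p^{-3}(|R|^3 + |B|^3)$ by a direct count of the $p^4$ ordered ERT configurations, with the right normalisation. What is left are exactly the quadratic-in-$f$ terms $3\sigma(\delta_R, f_R, f_R) + 3\sigma(\delta_B, f_B, f_B)$, which reproduces the identity stated in the excerpt.

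For the main term, strict convexity of $t \mapsto t^3$ on $[0, p^2]$ together with the constraint $|R| + |B| = p^2$ yields $|R|^3 + |B|^3 \geq 2(p^2/2)^3 = p^6/4$, hence $p^{-3}(|R|^3 + |B|^3) \geq p^3/4$. This is the clean source of the leading constant $1/4$ in the theorem.

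The main technical obstacle is controlling the two error terms $\sigma(\delta_R, f_R, f_R)$ and $\sigma(\delta_B, f_B, f_B)$. Here I would open the sums via the Fourier transform on $\F_p \times \F_p$: the presence of the rotation $\vec{y} \mapsto \vec{y}^{\perp}$ in one of the slots produces a non-diagonal twist that packages the exponential sum as a family of Kloosterman sums, which are then estimated by the Weil bound, gaining a factor $p^{1/2}$ over the trivial bound. This is precisely the calculation carried out in \cite[Theorem~4]{s_Ramsey}, which I would invoke rather than reproduce, obtaining $|\sigma(\delta_R, f_R, f_R)|, |\sigma(\delta_B, f_B, f_B)| = O(p^{5/2})$. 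Combining the convex lower bound on the main term with this error estimate yields the claimed bound $p^3/4 + O(p^{5/2})$.
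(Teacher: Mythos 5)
Your skeleton matches the paper's---decompose $\sigma(R,R,R)+\sigma(B,B,B)$ via $R=\delta_R+f_R$, lower-bound the main term by convexity of $t\mapsto t^3$, and control the quadratic terms by the Kloosterman-sum estimate from \cite[Theorem 4]{s_Ramsey}---but you have attached this skeleton to the wrong counting function, and as a result the writeup is internally inconsistent. The $\sigma$ you define, $\sum_{\vec x,\vec y}u(\vec x)v(\vec x+\vec y)w(\vec x+\vec y^{\perp})$, ranges over all $p^4$ pairs $(\vec x,\vec y)$, so $\sigma(\delta_R,\delta_R,\delta_R)=|R|^3/p^2$, not $|R|^3/p^3$: your claimed main term is off by a factor of $p$, and ``with the right normalisation'' is doing illegitimate work. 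Worse, for this $\sigma$ the quadratic terms $\sigma(\delta_R,f_R,f_R)$ (in every position of the constant) vanish \emph{identically}: any two of the three linear forms $\vec x$, $\vec x+\vec y$, $\vec x+J\vec y$ (with $J$ the rotation) give a bijective change of variables on $(\F_p^2)^2$, because $J$ and $J-I$ are invertible for odd $p$, so each such term factors as a constant times $(\sum f_R)^2=0$. This is exactly the mechanism you invoke to kill the linear terms, and it kills the quadratic ones too; consequently no Kloosterman sums ever arise in your computation, and the Weil-bound step has nothing to act on. The object the paper (following \cite{s_Ramsey}) actually analyses carries a distance constraint: $\sigma$ there counts congruent copies of a \emph{fixed} isosceles right triangle, i.e.\ triples with prescribed leg length $\|\vec y-\vec x\|=\|\vec z-\vec x\|=a$. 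There are only $\sim p^3$ such configurations, which is where the normalisation $p^{-3}(|R|^3+|B|^3)\ge p^3/4$ comes from, and it is the Fourier transform of the ``circle'' $\{\|\vec t\|=a\}$ in $\F_p^2$ that produces the Kloosterman sums and the $O(p^{5/2})$ error.

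Carried out honestly, your unconstrained $\sigma$ does give a clean (indeed exact) statement: all mixed terms vanish or cancel ($f_B=-f_R$ kills the cubic ones), so $\sigma(R,R,R)+\sigma(B,B,B)=(|R|^3+|B|^3)/p^2\ge p^4/4$, and after discarding the $p^2$ degenerate triples with $\vec y=\vec 0$ one gets at least $p^4/4-O(p^2)$ monochromatic isosceles right triangles of all sizes. That formally dominates the stated bound, but it is neither the computation you wrote down (you asserted a $p^{-3}$ main term and a Kloosterman-controlled error term that do not follow from your definition) nor the theorem the paper proves, whose $p^3/4+O(p^{5/2})$ numerology only makes sense for the fixed-size count. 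You need to either put the distance constraint into $\sigma$ and then genuinely invoke the Kloosterman estimates, or switch to the unconstrained count and restate what is being proved.
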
 

A similar argument (now the Kloosterman sums are replaced to the bounds for the zeroth Bessel function) gives 

\begin{theorem}
    Suppose that we have a measurable colouring of the euclidean plane with two colours.
	Then the measure of  monochromatic isosceles right triangles in any of such colouring  is at least 
$
	0.0079 \,.
$
\label{t:mono}
\end{theorem}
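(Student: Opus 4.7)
The plan is to carry out the scheme of \eqref{f:sigma} in the Euclidean setting, with the Kloosterman sum bound replaced by classical estimates for the zeroth Bessel function $J_0$. Let $\mathbb{R}^2 = R \sqcup B$ be a measurable two-colouring with upper densities $\delta_R + \delta_B = 1$. First we would work on a large torus $(\mathbb{R}/L\mathbb{Z})^2$ (letting $L\to\infty$ at the end) and define the ERT counting form
\[
\sigma(g_1,g_2,g_3) = \frac{1}{L^2}\int_{(\mathbb{R}/L\mathbb{Z})^2}\int_{S^1} g_1(x)\,g_2(x+y)\,g_3(x+y^\perp)\,d\mu(y)\,dx,
\]
where $\mu$ is the uniform probability measure on the unit circle. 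Setting $\mathbf{1}_R = \delta_R + f_R$ and $\mathbf{1}_B = \delta_B + f_B$ and expanding trilinearly, the linear-in-$f$ terms vanish because $\int f_R = \int f_B = 0$, and we obtain the analogue of \eqref{f:sigma}:
\[
\sigma(R,R,R) + \sigma(B,B,B) = \delta_R^3 + \delta_B^3 + \sigma(f_R,f_R,f_R) + \sigma(f_B,f_B,f_B) + 3\sigma(\delta_R,f_R,f_R) + 3\sigma(\delta_B,f_B,f_B).
\]
Convexity forces $\delta_R^3 + \delta_B^3 \geq 1/4$, so the task reduces to bounding the four error terms from below.

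The next step is a Fourier representation. By Plancherel and the basic identity $\hat{\mu}(\xi) = J_0(2\pi|\xi|)$, each error term becomes an integral of $|\hat{f}|^2$ (or a trilinear analogue) against a Bessel kernel; for example,
\[
\sigma(\delta_R,f_R,f_R) = \delta_R \int |\hat{f_R}(\xi)|^2 \, J_0\bigl(2\pi\sqrt{2}\,|\xi|\bigr)\,d\xi,
\]
since the vector $y - y^\perp$ traces a circle of radius $\sqrt{2}$ as $y$ ranges over $S^1$. The trilinear terms admit similar but more elaborate representations involving kernels of the form $J_0(2\pi|\xi_2 - \xi_3^\perp|)$. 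We would then invoke the lower bound $J_0(x) \geq -|j_0|$ (with $|j_0| \approx 0.4028$ the modulus of the first minimum), the decay $|J_0(x)| = O(x^{-1/2})$, and the Plancherel identity $\|\hat{f_R}\|_2^2 = \|f_R\|_2^2 \leq \delta_R\delta_B \leq 1/4$, in order to produce universal numerical lower bounds on each contribution. Summing these with the correct signs should yield the stated constant $0.0079$.

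The hard part will be the explicit numerical constant. The crude pointwise bound $|J_0| \leq 1$ is useless, and even the sharp minimum $J_0 \geq -0.4028$ gives only a trivial lower bound on the error that would cancel the main term $1/4$. The saving must come from combining the $L^2$ decay of $J_0$ with a dyadic frequency decomposition of $\hat{f_R}$, separating the low-frequency part (where $J_0$ is close to $1$ but the mass of $|\hat{f_R}|^2$ is small because $\hat{f_R}(0) = 0$ forces continuity to kick in near the origin) from the decaying high-frequency tail, exactly as in the proof of \cite[Theorem 4]{s_Ramsey}. Modulo this careful Bessel bookkeeping the argument is a direct Euclidean transcription of the finite-field proof underlying \eqref{f:sigma}.
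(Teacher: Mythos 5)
There is a genuine gap, in two places. First, the trilinear error terms $\sigma(f_R,f_R,f_R)+\sigma(f_B,f_B,f_B)$, which you propose to bound via ``more elaborate'' Bessel representations, do not need to be bounded at all: since $B$ is the complement of $R$, you have $f_B=-f_R$, hence $\sigma(f_B,f_B,f_B)=-\sigma(f_R,f_R,f_R)$ and the two terms cancel exactly (this is the content behind \eqref{f:sigma}). This is not cosmetic: a trilinear form in a mean-zero function cannot be bounded below by a universal constant using only the $L^2$ information you have available, so without noticing the cancellation your plan stalls. The same identity makes the quadratic terms collapse, $\sigma(\delta_R,f_R,f_R)+\sigma(\delta_B,f_B,f_B)=(\delta_R+\delta_B)\,\sigma(1,f_R,f_R)=\sigma(1,f_R,f_R)$.

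Second, and more importantly, the mechanism you propose for extracting the numerical constant --- a dyadic frequency decomposition in which the low-frequency mass of $|\hat f_R|^2$ is claimed to be small because $\hat f_R(0)=0$ --- is not the right one and would fail: a colouring varying at a very large scale concentrates essentially all of $\|\hat f_R\|_2^2$ arbitrarily close to the origin, and a single zero of $\hat f_R$ at $\xi=0$ controls nothing in a neighbourhood of $0$; in fact low frequencies are the harmless regime, since $J_0\approx 1>0$ there. The actual saving, which is the whole point of \cite[Theorem 10]{s_Ramsey} as invoked in the paper, is that the three placements of the constant function produce the three pairwise distances $1,1,\sqrt{2}$ of the triangle $x$, $x+y$, $x+y^\perp$, and hence a single combined kernel $2J_0(t)+J_0(\sqrt{2}\,t)$ acting on $|\hat f_R(\xi)|^2$. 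The minima of $J_0(t)$ and $J_0(\sqrt{2}\,t)$ are attained at different $t$, so $\min_{t\ge 0}\bigl(2J_0(t)+J_0(\sqrt{2}\,t)\bigr)\approx -0.9683>-1$, whereas bounding each kernel separately by $\min J_0\approx -0.4028$ gives $-1.208<-1$ and hence, as you yourself observe, nothing. With the cancellation and the combined-kernel minimum in hand, $\|f_R\|_2^2=\delta_R\delta_B\le 1/4$ yields the bound $\tfrac{1}{4}+\tfrac{1}{4}\min_{t\ge 0}\bigl(2J_0(t)+J_0(\sqrt{2}\,t)\bigr)\approx 0.0079$ directly; no frequency decomposition is needed.
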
	

\begin{proof}	
	To obtain the  statement we use \cite[Theorem 10]{s_Ramsey} and derive that the desired measure is at least 
\begin{equation}\label{tmp:28.12_1}
	\frac{1}{4} + \frac{1}{4} \cdot \min_{t\ge 0} \left( 2J_0 (t) + J_0 (\sqrt{2}t) \right) \,, 
\end{equation}
	where $J_0$ is the zeroth Bessel function.
	Using Maple we see that the minimum in \eqref{tmp:28.12_1} is at least $-.9683275949$.
	This completes the proof. 
\end{proof}
\qed

\section{Acknowledgements}
Research of the first author received financial support from the Ministry of Educational and Science of the Russian Federation in the framework of MegaGrant no 075-15-2019-1926. 
Research of the second author was supported in part by an NSERC Discovery grant, OTKA K 119528 and NKFI KKP 133819 grants.
The authors are thankful to the referee for the useful comments and for pointing to important references.

\end{document}